\documentclass[12pt]{article}
\usepackage{amsmath}
\usepackage{amsthm}
\usepackage{amssymb}
\usepackage[T1]{fontenc}
\usepackage[sc]{mathpazo}
\usepackage{enumerate}
\usepackage{etoolbox}

\patchcmd{\section}{\scshape}{\bfseries}{}{}

\newcommand{\R}{{\mathbb{R}}}
\newcommand{\Q}{{\mathbb{Q}}}
\newcommand{\N}{{\mathbb{N}}}
\newcommand{\Z}{{\mathbb{Z}}}

\renewcommand{\subset}{\subseteq}

\DeclareMathOperator{\Frac}{Frac}
\DeclareMathOperator{\tr}{tr}

\newtheorem{theorem}{Theorem}%[section]
\newtheorem{proposition}[theorem]{Proposition}

\newtheorem{corollary}[theorem]{Corollary}

\theoremstyle{definition}

\theoremstyle{remark}
\newtheorem{remark}[theorem]{Remark}
\newtheorem{example}[theorem]{Example}
\newtheorem{counterexample}[theorem]{Counterexample}

\begin{document}
\title{Detecting divisibility in uniquely divisible $\mathfrak{N}$-semigroups 
via homomorphisms to $\mathbb{R}_{>0}$
}
\author{Colin Tan\footnote{\textbf{Email:} 2601203i@student.tp.edu.sg \\
\textbf{Keywords:}
archimedean semigroup, hyperplane separation theorem, local-global principle, $\mathfrak{N}$-semigroup, uniquely divisible semigroup. \\
\textbf{Subject class:} 
Primary 20M14; Secondary 20M15, 52A05
}}
\date{25 Sep 2026}

%\address
%{Colin Tan,
%Department of Statistics \& Applied Probability,
%National University of Singapore,
%Block S16,
%6 Science Drive 2,
%Singapore 117546}
%\email{statwc@nus.edu.sg}

\maketitle
%\numberwithin{equation}{section}

%-------------------------------------------------------------------%

\begin{abstract}
An \emph{$\mathfrak{N}$-semigroup} is an archimedean, idempotent-free, cancellative and commutative semigroup. For example, the additive semigroup $\R_{>0}$ of strictly positive real numbers is a $\mathfrak{N}$-semigroup. 
Hewitt-Zuckerman showed that every $\mathfrak{N}$-semigroup admits a semigroup homomorphism to $\R_{>0}$.
We show that every uniquely divisible $\mathfrak{N}$-semigroup $S$ admits enough semigroup homomorphisms to $\R_{>0}$ to detect divisibility in $S$. %They were studied by Tamura and Kobayashi.
The main ingredient is a version of the Eidelheit and Kakutani hyperplane separation theorem for vector spaces over $\Q$. A counterexample due to Ravsky explains why this result does not generalise to the larger class of $\mathfrak{N}$-semigroups.
\end{abstract}

%-------------------------------------------------------------------%

The natural numbers $\mathbb{N} = \{1, 2, 3, \dots\}$, the strictly positive rational numbers $\mathbb{Q}_{> 0}$, and the strictly positive real numbers $\mathbb{R}_{> 0}$ under addition are examples of so-called $\mathfrak{N}$-semigroups. All semigroups below are commutative and written additively. Say that $x$ \emph{divides} $y$ in a commutative semigroup $S$ if $x + z = y$ for some $z \in S$, and write $x \mid y$ (this notation is used in \cite[\S 6]{Kobayashi1973}). Say that $S$ is \emph{archimedean} if, for each $x, y \in S$, there is some $n \in \mathbb{N}$ such that $x \mid ny$. An \emph{$\mathfrak{N}$-semigroup} is, by definition, an archimedean, idempotent-free, cancellative, commutative semigroup. $\mathfrak{N}$-semigroups were studied mainly from the 1950s to the 1970s, extensively by Tamura \cite{Tamura1957}, \cite{Tamura1973}, \cite{Tamura1974} and also other mathematicians \cite{Hall1972}, \cite{Hamilton1978}. %Kobayashi showed, for the class of so-called affine $\mathfrak{N}$-semigroups, there are enough homomorphisms to $\R_{> 0}$ to detect the equality binary relation. \textbf{check.} \cite{Kobayashi1973}.

In this note, we observe a local-global principle for divisibility in uniquely divisible $\mathfrak{N}$-semigroups. 
Say that $S$ is \emph{uniquely divisible} if, for each $x \in S$ and $n \in \mathbb{N}$, there exists a unique $y \in S$ such that $ny = x$. %Divisibility and unique divisbility conincide for $\mathfrak{N}$-semigroups. 
For example, $\Q_{>0}$ and $\R_{>0}$ are uniquely divisible, while $\N$ is not. 
%A class of $\mathfrak{N}$-semigroups that are not divisible are the so-called \emph{numerical semigroups}, 
The natural multiplicative $\mathbb{N}$-action on any commutative semigroup $S$ extends uniquely to a $\mathbb{Q}_{> 0}$-action if $S$ is uniquely divisible.  Every $\mathfrak{N}$-semigroup admits a semigroup homomorphism to $\R_{> 0}$, by a result of Hewitt-Zuckerman \cite{HewittZuckerman1956} (see also Kobayashi \cite{Kobayashi1973}). The following theorem says that every uniquely divisible $\mathfrak{N}$-semigroup $S$ admits enough semigroup homomorphisms to $\R_{>0}$ to detect divisibility in $S$.

\begin{theorem}
%[Local-Global Principle for Divisibility] 
\label{thm: local-global}
Let $S$ be a uniquely divisible $\mathfrak{N}$-semigroup. For each $x, y \in S$, if $f(x) \mid f(y)$ in $\mathbb{R}_{> 0}$ for all semigroup homomorphisms $f : S \to \mathbb{R}_{> 0}$, then $x \mid y$ in $S$.
\end{theorem}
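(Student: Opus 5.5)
Since $S$ is cancellative and commutative, I will embed it in its Grothendieck group $G$. Because $S$ is uniquely divisible, $G$ is torsion-free and divisible, so $G$ is a vector space over $\Q$, and the $\Q_{>0}$-action on $S$ is the restriction of scalar multiplication. Inside $G$ the set $S$ is a convex cone over $\Q$: it is closed under addition and under multiplication by positive rationals. It does not contain $0$, since $S$ is idempotent-free. Any semigroup homomorphism $f\colon S\to\R_{>0}$ extends uniquely to a $\Q$-linear functional $\tilde f\colon G\to\R$ that is strictly positive on $S$, and conversely. Moreover $x\mid y$ in $S$ exactly when $y-x\in S$. So the theorem becomes the following statement: if $g:=y-x$ satisfies $\tilde f(g)>0$ for every $\Q$-linear $\tilde f$ that is positive on $S$, then $g\in S$. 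I will prove the contrapositive. Assuming $g\notin S$, I will produce a $\Q$-linear functional $\phi$ with $\phi>0$ on $S$ and $\phi(g)\le 0$.

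The key structural input is archimedeanity. For any $s,t\in S$ there is $n$ with $t\mid ns$, that is, $ns-t\in S$. Consequently every element $s\in S$ is an \emph{algebraic interior point} (core point) of $S$ relative to $G$. Given any $h\in G$, write $h=a-b$ with $a,b\in S$. By archimedeanity there is $n$ with $ns-b\in S$. Then for rational $0<\epsilon\le 1/n$ we get $s-\epsilon h=\epsilon(ns-b)+(1-n\epsilon)s+\epsilon b'$, up to the $a$ part, which I will handle similarly. The upshot is that $s+\epsilon h\in S$ for all small rational $\epsilon$. In particular $S$ has nonempty core, and indeed $S$ equals its own core.

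Next comes the separation step. This is a $\Q$-version of the Eidelheit--Kakutani theorem: a convex set $C$ in a $\Q$-vector space with nonempty core, and a point $g\notin C$, can be separated by a nonzero $\Q$-linear functional $\phi$ with $\phi(c)\ge\phi(g)$ for all $c\in C$. I would prove this by Zorn's lemma. Take a maximal $\Q$-convex set $D\supseteq S$ (or rather a maximal convex cone) with $g\notin D$, or equivalently use the Minkowski gauge of $S-s_0$ together with Hahn--Banach. The Hahn--Banach extension argument works verbatim over the ordered field $\Q$, with real-valued sublinear functionals. I will use the gauge-based route. Fix $s_0\in S$ and set $p(h)=\inf\{\lambda\in\Q_{>0}: h\in\lambda(S-g)\}$. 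The core property makes $p$ finite and sublinear. Then I define $\phi$ on $\Q g$ and extend it dominated by $p$. This separation step is the main obstacle. It needs care in two places: checking that the core condition makes the gauge finite over $\Q$, and checking that Hahn--Banach extension (a Zorn plus one-step extension argument using sup/inf in $\R$) goes through when the scalars are only rational.

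Finally, I upgrade separation to strict positivity. Since $S$ is a cone, $\phi(c)\ge\phi(g)$ on $S$ forces $\phi\ge0$ on $S$ (scale $c$ by large rationals) and $\phi(g)\le 0$ (scale $c$ by small rationals toward $0$). Because every $s\in S$ is a core point, if $\phi(s)=0$ then $\phi(s\pm\epsilon h)\ge0$ for all $h$, which forces $\phi=0$, a contradiction. Hence $\phi>0$ on $S$. Restricting $\phi$ to $S$ gives a homomorphism $f\colon S\to\R_{>0}$ with $f(y)-f(x)=\phi(g)\le0$, so $f(x)\nmid f(y)$ in $\R_{>0}$. This contradicts the hypothesis and proves the theorem.
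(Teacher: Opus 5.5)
Your proposal is correct and is essentially the paper's proof. Both embed $S$ as a $\Q$-convex cone in $\Frac(S)$, use archimedeanity to show every point of $S$ is a $\Q$-algebraic interior point, apply the $\Q$-version of Eidelheit--Kakutani, and use interiority to upgrade $\phi\ge 0$ to $\phi>0$ on $S$. There are two differences. You propose to prove the separation theorem yourself via a gauge plus Hahn--Banach over $\Q$, which does go through, whereas the paper cites Scheiderer's Corollary B.19. You also get strict positivity from $\phi\neq 0$ rather than from the normalization $f(u)=1$.

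When writing it up, fix two slips:
\begin{enumerate}[(i)]
\item The gauge must be that of $S-s_0$, not $S-g$: the set $S-g$ does not contain $0$, and its gauge can be infinite. Correspondingly, $\phi$ should be defined on $\Q(g-s_0)$ by $\phi(g-s_0)=1$.
\item The interior-point identity should read $s+\epsilon h=\epsilon(ns-b)+(1-n\epsilon)s+\epsilon a$, with $n$ chosen so that both $ns-a$ and $ns-b$ lie in $S$.
\end{enumerate}
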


The converse of Theorem \ref{thm: local-global} is true, since $x \mid y$ in $S$ implies that $f(x) \mid f(y)$ in $\R_{>0}$ for every semigroup homomorphism $f : S \to \R_{> 0}$. Note that, in $\R_{>0}$, $f(x) \mid f(y)$ is equivalent to strict inequality $f(x) < f(y)$. We give two examples of divisible $\mathfrak{N}$-semigroups to illustrate Theorem \ref{thm: local-global}. 
%Let $\mathbb{K} = \mathbb{Q}$ or $\mathbb{R}$, and let $n \in \mathbb{N}$:

\begin{example}
Consider the $\mathfrak{N}$-semigroup $S = \mathbb{Q}_{>0}^n$, whose elements are $n$-tuples $x = (x_1, \dots, x_n)$, where $x_1, \dots, x_n \in \Q_{>0}$. Each semigroup homomorphism $f : \mathbb{Q}_{>0}^n \to \mathbb{R}_{>0}$ is the dot product against some associated $t \in \mathbb{R}_{\ge 0}^n \setminus \{0\}$. Write $f_t :\mathbb{Q}_{>0}^n \to \mathbb{R}_{>0}$ for the semigroup homomorphism associated to $t \in \mathbb{R}_{\ge 0}^n \setminus \{0\}$, given by $f_t(x) = t \cdot x = t_1 x_1 + \cdots + t_n x_n$ for all $x \in S$.

Now, suppose that $f_t(x) \mid f_t(y)$ in $\R_{> 0}$ for all $t \in \mathbb{R}_{\ge 0}^n \setminus \{0\}$. In particular, choosing $t = e_i = (0, \dots, 0, 1, 0, \dots, 0)$ with a single `$1$` in the $i$-th coordinate, then $x_i = f_{e_i}(x) < f_{e_i}(y) = y_i$. Letting $i$ range from $1, \dots, n$, we obtain $x \mid y$ in ${\Q_{>0}}^n$. 
\end{example}

\begin{example}
Consider $S = \mathcal{P}_n(\Q)$, the additive semigroup of positive-definite symmetric $n \times n$ matrices over $\mathbb{Q}$. Each semigroup homomorphism $f : \mathcal{P}_n(\Q) \to \mathbb{R}_{>0}$ is given by the Frobenius inner product against some nonzero positive semidefinite matrix $A$ over $\mathbb{R}$. Write $f_A : \mathcal{P}_n(\Q) \to \R_{> 0}$ for the semigroup homomorphism associated to nonzero $A \succeq 0$ over $\R$, given by $f_A(M) = \langle A, M\rangle =  \tr(AM)$ for all $M \in S$, where $\tr$ is the trace operator.

Let $M, N \in \mathcal{P}_n(\Q)$. Suppose that $f_A(M) \mid f_A(N)$ in $\R_{> 0}$ for all nonzero $A \succeq 0$ over $\R$. In particular, when $A = v v^\intercal$ has rank $1$, $v^\intercal M v < v^\intercal N v$ for all nonzero column vectors $v \in \R^n$, therefore $N - M$ is positive definite, i.e.\ $M \mid N$ in $\mathcal{P}_n(\Q)$.
\end{example}

Theorem \ref{thm: local-global} follows from a version of the Eidelheit-Kakutani hyperplane separation theorem for vector spaces over $\Q$. 
Let $V$ be a $\Q$-vector space. For $x, y \in V$, write $[x, y]_{\Q} = \{(1 -t) x + ty : \, t \in \Q, 0 \le t \le 1\}$. A \emph{$\Q$-convex set} in $V$ is a subset $S \subset V$ such that $[x, y]_{\Q} \subset S$ for all $x , y\in S$. In a $\Q$-convex set $S \subset V$, a point $u \in S$ is a \emph{$\Q$-algebraic interior point} if, for each $v \in V$, there exists $\varepsilon \in \Q_{> 0}$ with $[u - \varepsilon v, u + \varepsilon v]_{\Q} \subset S$. The above terminology is taken from  \cite[p.\ 375]{ScheidererGTM}. 

A \emph{cone} in $V$ is a subset $C \subset V$ closed under the multiplication of scalars from $\Q_{> 0}$, i.e.\ $cx \in C$ for all $c \in \Q_{>0}$ and all $x \in S$. We do not require our cones to contain the origin. 

The following version of the Eidelheit and Kakutani Theorem for $\Q$-convex cones with $\Q$-algebraic interior point was first noted by Burgdorf-Scheiderer-Schweighofer to follow from the original  Eidelheit and Kakutani Theorem over $\R$ \cite[(Subsection) 2.4]{BurgdorfScheidererSchweighofer2012}. A detailed deduction was given by Scheiderer in his recent textbook  \cite[Appendix B]{ScheidererGTM}. (The original papers of Eidelheit and Kakutani are \cite{Eidelheit1936} and \cite{Kakutani1937}. Expositions of their hyperplane separation theorem over $\R$ are given in textbooks by Barvinok \cite[III.1.7]{Barvinok2002} and K\"oethe \cite[around p.\ 177]{Koethe1969}.)

\begin{proposition}[{\cite[Corollary B.19]{ScheidererGTM}}] \label{prop: separationTheorem}
Let $V$ be a $\Q$-vector space and let $C \subset V$ be a $\Q$-convex cone with
$\Q$-algebraic interior point $u \in C$. If $v \in V$ is not a $\Q$-algebraic interior point of $C$, there is a $\Q$-linear map $f : V \to \R$ such that $f \ge 0$ on $C$, $f (u) = 1$ and $f (v) \le 0$.
\end{proposition}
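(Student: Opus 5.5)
The plan is to prove Proposition~\ref{prop: separationTheorem} directly, by a Minkowski-gauge and Hahn--Banach argument carried out over $\Q$. Scalars stay in $\Q$, but the functionals take real values. Put $C' = C \cup \{0\}$; this is still a $\Q$-convex cone. For $w \in V$ define
\[
p(w) = \inf\{\lambda \in \Q : \lambda u - w \in C'\}.
\]

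\emph{Step 1: $p$ is a real-valued sublinear functional.} First, $p(w) < \infty$. Since $u$ is a $\Q$-algebraic interior point, there is $\varepsilon \in \Q_{>0}$ with $u - \varepsilon w \in C$. Scaling by $1/\varepsilon$ gives $\varepsilon^{-1}u - w \in C$.

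Next, $p$ is positively $\Q$-homogeneous, because $C'$ is a cone. It is subadditive, because $C'$ is $\Q$-convex: for $a, b \in C'$ we have $a + b = 2\cdot\tfrac12(a+b) \in C'$.

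Finally, $p(w) + p(-w) \ge 0$ for all $w$. Suppose instead that $\lambda u - w \in C'$ and $\mu u + w \in C'$ with $\lambda + \mu < 0$. Adding gives $-u \in C'$. Since $u$ is algebraically interior, $C'$ then absorbs every direction around $0$ ($0 = u + (-u)$ and convexity), so $C' = V$. But then every point is interior, contradicting the hypothesis on $v$. This inequality also shows $p > -\infty$.

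\emph{Step 2: key inequalities.}
\begin{enumerate}[(a)]
\item If $w \in C$, then $p(-w) \le 0$, witnessed by $\lambda = 0$.
\item $p(u) \le 1$, witnessed by $\lambda = 1$, and $p(-u) \le -1$, witnessed by $\lambda = -1$ since $-u + u = 0 \in C'$.
\item $p(-v) \ge 0$. Suppose $\lambda u + v \in C'$ for some rational $\lambda < 0$. Then $v = (\lambda u + v) + |\lambda| u$. This expresses $v$ as a point of $C'$ plus an interior point of $C$, since $|\lambda| u$ is interior.
\end{enumerate}
For (c) one needs the routine fact that interior point plus element of $C'$ is interior. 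If $x \pm \varepsilon z \in C$ and $c \in C'$, then $x + c \pm \varepsilon z = 2\bigl(\tfrac12(x \pm \varepsilon z) + \tfrac12 c\bigr) \in C$. So $v$ would be interior, a contradiction.

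\emph{Step 3: Hahn--Banach over $\Q$.} On the line $\Q(-v)$ set $g_0(t(-v)) = t\,p(-v)$. For $t \ge 0$ we have $g_0 = p$ there by homogeneity. For $t < 0$ we have $g_0(t(-v)) = -|t|\,p(-v) \le |t|\,p(v) = p(t(-v))$, using Step 1. Now extend $g_0$ to a $\Q$-linear $g : V \to \R$ with $g \le p$. The usual one-step extension plus Zorn's lemma works verbatim over $\Q$, because the sup/inf choice of the new value is made in $\R$ and only $\Q_{>0}$-homogeneity of $p$ is used.

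Then:
\begin{itemize}
\item $g(v) = -p(-v) \le 0$ by (c).
\item On $C$, $-g(w) = g(-w) \le p(-w) \le 0$ by (a), so $g \ge 0$ on $C$.
\item From (b), $g(u) \le p(u) \le 1$, and $-g(u) = g(-u) \le p(-u) \le -1$, hence $g(u) = 1$.
\end{itemize}
Taking $f = g$ proves the proposition. The main obstacle is Step 1's finiteness of $p$ (ruling out $-u \in C'$) together with Step 2(c); both hinge on turning non-interiority of $v$ into the needed inequalities. The Hahn--Banach step over $\Q$ is standard once one checks that the Zorn argument never needs real scalars.
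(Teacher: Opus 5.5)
Your proof is correct, and it follows a different route from the paper. The paper gives no argument of its own: it cites Scheiderer's Corollary B.19, presented (after Burgdorf--Scheiderer--Schweighofer) as a consequence of the classical Eidelheit--Kakutani theorem over $\R$.

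You instead rerun the proof of that theorem directly over $\Q$. The gauge $p$ of $C' = C \cup \{0\}$ centred at $u$ is a real-valued, $\Q_{>0}$-homogeneous sublinear functional. Non-interiority of $v$ becomes $p(-v) \ge 0$. A $\Q$-linear Hahn--Banach extension from the line through $v$ then yields $f$, and the one-step extension indeed needs only subadditivity, $\Q_{>0}$-homogeneity and real-valuedness of $p$.

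One small gloss: from $-u \in C'$ you obtain $C' = V$, but the contradiction with the hypothesis on $v$ needs $C = V$. This holds because $0 = \tfrac12 u + \tfrac12(-u) \in C$ (or $u = 0 \in C$), so $C = C'$.

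What your approach buys is self-containedness. A deduction from the real theorem must relate $C$ to a cone in a real vector space, for instance the cone generated by $C$ in $V \otimes_\Q \R$. It must then check that $u$ stays an algebraic interior point there while $v$ does not become one. Your argument sidesteps that comparison entirely and works verbatim for any subfield of $\R$ in place of $\Q$. It also shows that real numbers enter only as infima and suprema of sets of rationals, consistent with the paper's remark that $\Q_{>0}$ cannot serve as the target. The paper's citation, in turn, keeps the note short by resting on a classical, well-documented theorem.
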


We need only the special case when $C$ is \emph{$\Q$-algebraically open}, i.e.\ when every point in $C$ is a $\Q$-algebraic interior point.
%and \cite[Definition II.1.5, p.\ 45]{Barvinok2002}. 
A cone is \emph{blunt} if it does not contain the origin.
Any proper $\Q$-algebraically open $\Q$-convex cone is necessarily blunt.

\begin{corollary} \label{cor: separationTheorem}
Let $C \subset V$ be a $\Q$-algebraically open $\Q$-convex cone in a $\Q$-vector space $V$. 
For any $v \in V \setminus C$, there is a $\Q$-linear map $f : V \to \R$ such that $f > 0$ on $C$ and $f(v) \le 0$.  
\end{corollary}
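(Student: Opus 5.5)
We deduce Corollary \ref{cor: separationTheorem} from Proposition \ref{prop: separationTheorem}. First dispose of the degenerate case $C = \emptyset$: then any $\Q$-linear map works vacuously, e.g.\ $f = 0$, since the condition $f>0$ on $C$ is empty and $f(v) = 0 \le 0$. So assume $C \neq \emptyset$ and fix some $u \in C$. Because $C$ is $\Q$-algebraically open, $u$ is a $\Q$-algebraic interior point of $C$. Since $v \notin C$, $v$ is in particular not a $\Q$-algebraic interior point of $C$ (interior points are by definition points of $C$). Proposition \ref{prop: separationTheorem} then supplies a $\Q$-linear $f : V \to \R$ with $f \ge 0$ on $C$, $f(u) = 1$ and $f(v) \le 0$.

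It remains to upgrade $f \ge 0$ on $C$ to $f > 0$ on $C$, and this is the only real step. Take any $w \in C$. Since $w$ is a $\Q$-algebraic interior point, applying the definition with direction $u$ gives $\varepsilon \in \Q_{>0}$ with $[w - \varepsilon u, w + \varepsilon u]_{\Q} \subset C$; in particular $w - \varepsilon u \in C$. Hence $0 \le f(w - \varepsilon u) = f(w) - \varepsilon f(u) = f(w) - \varepsilon$ by $\Q$-linearity, so $f(w) \ge \varepsilon > 0$. This gives $f>0$ on $C$ together with $f(v)\le 0$, as required.

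I expect no serious obstacle: the only subtlety is choosing the perturbation direction to be $u$ itself, so that the normalisation $f(u)=1$ converts mere nonnegativity into strict positivity; the cone property is not even needed for this step beyond what the Proposition already uses.
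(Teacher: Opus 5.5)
Your proof is correct and follows the paper's argument: apply Proposition~\ref{prop: separationTheorem} with an arbitrary $u \in C$, then perturb any $w \in C$ in the direction $u$, so that the normalisation $f(u)=1$ forces $f(w) \ge \varepsilon > 0$. Your separate treatment of $C = \emptyset$ is a small improvement, since the paper's ``Choose some $u \in C$'' tacitly assumes $C \neq \emptyset$; the paper's separate case $C = V$ needs no special handling in your write-up, because it is vacuous.
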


\begin{proof}
If $C = V$, then $V \setminus C = \emptyset$, thus the conclusion is vacuously true.

Suppose instead that the $\Q$-algebraically open $\Q$-convex cone $C \subsetneqq V$ is  proper. Choose some $u \in C$. After applying Proposition \ref{prop: separationTheorem}, the only thing left to show is that $f > 0$ on $C$. Indeed, any $x \in C$ is a $\Q$-algebraic interior point, so $x - \varepsilon u \in C$ for some $\varepsilon \in \Q_{>0}$. Therefore $f(x - \varepsilon u) \ge 0$ and hence $f(x) \ge \varepsilon f(u) = \varepsilon > 0$.
\end{proof}

\begin{proof}[Proof of Theorem \ref{thm: local-global}]
A uniquely divisible $\mathfrak{N}$-semigroup $S$ embeds as a $\Q$-convex cone in its abelian group of fractions $V := \Frac(S)$, which is divisible and torsion-free and hence naturally a $\Q$-vector space. This cone is $\Q$-algebraically open because $S$ is archimedean and uniquely divisible. Indeed, given $x \in S$ and $v = a - b \in V$, where $a, b \in S$, the archimedeanity of $S$ gives some large $n \in \N$ such that $a, b \mid nx$ in $S$. Hence, $\varepsilon a, \varepsilon b \mid x$, where $\varepsilon := 1/n \in \mathbb{Q}_{>0}$, since $S$ is uniquely divisible. 
Therefore $x \pm \varepsilon v = x \pm \varepsilon (a - b) = x \pm \varepsilon a \mp \varepsilon b \in S$, hence $[x - \varepsilon v, x + \varepsilon v]_{\Q} \subset S$ due to its $\Q$-convexity. 

We are ready to prove the contrapositive of Theorem \ref{thm: local-global}.
Suppose that $x \nmid y$ in $S$. Then $y - x \in V \setminus S$. Corollary \ref{cor: separationTheorem} then yields a $\Q$-linear map $f : V \to \R$ such that $f > 0$ on $S$ and $f(y - x) \le 0$, which gives $f(y) \le f(x)$. Restricting $f$ to $S$ yields the desired semigroup homomorphism $f : S \to \R_{> 0}$ witnessing that $f(x) \nmid f(y)$ in $\R_{>0}$.
\end{proof}

% Praise the Lord!

\begin{remark}
The target uniquely-divisible $\mathfrak{N}$-semigroup $\R_{>0}$ in Theorem \ref{thm: local-global} cannot be replaced by $\Q_{>0}$, or the local-global principle fail.
Indeed, $S = \R_{>0}$ would be a counterexample, as there are no semigroup homomorphisms $ \R_{>0} \to \Q_{>0}$. 
\end{remark}

We end with a counterexample, due to Ravsky \cite{Ravsky2025}, building on a comment of Tringali, both of whom kindly discussed a question of the author on MathOverflow. This counterexample tells us that the local-global principle of Theorem \ref{thm: local-global} does not hold more generally for arbitrary $\mathfrak{N}$-semigroups. A commutative semigroup $S$ is \emph{divisible} if, for each $x \in S$ and $n \in \mathbb{N}$, there is some $y \in S$ such that $ny = x$ (see \cite{user69762012}). 

\begin{example}[{\cite{Ravsky2025}}]
There is a $\mathfrak{N}$-semigroup $S$ that is not divisible and some $x,y \in \mathfrak{N}$ such that $x \nmid y$ in $S$, yet $f(x) \mid f(y)$ for every semigroup homomorphism $f : S \to \R_{> 0}$.

For example, writing $\Z_2 = \{0, 1\}$ for the cyclic group of order 2, take
\begin{equation}
S = (\N \oplus \Z_2) \setminus \{(1, 1)\}, 
\end{equation}
and $x = (1, 0)$, $y = (2, 1)$ in $S$.
\end{example}

\begin{proof}
We verify only that $S$ is archimedean. (The reader may routinely check that $S$ satisfies the other properties of a $\mathfrak{N}$-semigroup.)
%The other $3$ properties that define a $\mathfrak{N}$-semigroup -- namely cancellativity, not having idempotents, and commutativity -- are inherited from the corresponding properties satisfied by both $\N$ and $\Z_2$. 
To this end, let $(m,g), (m',g') \in S$ be given. Since the first component $\N \subset \N \oplus \Z_2$ of the direct sum is archimedean, there is some $n \in \N$ such that $m \mid n m'$ in $\N$. Without loss of generality, assume that $n$ is large enough that 
\begin{equation} \label{eq: nLargeEnough}
1 + m < nm';
\end{equation}
for example, set $n =  m + 2 \in \N$. Furthermore $g \mid ng'$ since the abelian group $\Z_2$ is closed under subtraction. Thus $(m, g) \mid (nm',ng')$ in $\N \oplus \Z_2$. Therefore $(m, g) \mid (nm',ng')$ in $S$ also because of \eqref{eq: nLargeEnough}.

Also, $S$ is not divisible since the first component $\N \subset \N \oplus \Z_2$ is not divisible. 

Finally, $x \nmid y=(1,0)$ in $S$, since $y - x = (1, 1)$ does not lie in $S$ by construction. Yet, because $4x = (4, 0) = 2(2,1) = 2y$, thus 
every semigroup homomorphism  $f:S \to \R_{> 0}$ satisfies $4f(x) = 2f(y)$ and hence $f(x) < f(y)$ in $\R_{> 0}$.
\end{proof}

The author would be interested to know whether a separation theorem such as Theorem \ref{thm: local-global} holds true more generally for divisible $\mathfrak{N}$-semigroups that are not necessarily uniquely divisible. 

\paragraph{Acknowledgements.} We thank Mark Kambites and Salvo Tringali for private communication. We thank Salvo Tringali and Alex Ravsky for discussion on MathOverflow.
Chatbots, such as Gemini, ChatGPT, and Claude were used throughout the research process. This paper itself is largely written by hand and the author takes responsibility for its correctness.

%===================================================================%
%===================================================================%

\end{document}